\documentclass[11pt]{amsart}%
\usepackage{amsmath,amssymb,amsfonts}%
\usepackage{amsthm}%
\usepackage{mathrsfs}%
\usepackage[title]{appendix}%
\usepackage{xcolor}%
\usepackage{manyfoot}
\usepackage[mathlines]{lineno}
\usepackage[utf8]{inputenc}

\newtheorem{theorem}{Theorem}[section]
\newtheorem{lemma}{Lemma}[section]

\theoremstyle{definition}

\newtheorem{definition}[theorem]{Definition}

\theoremstyle{remark}
\theoremstyle{remark}
\newtheorem{remark}{Remark}[section]

\newtheorem{rule-def}[theorem]{Rule}

\usepackage{microtype}
\usepackage{enumerate}
\usepackage{mismath}
\usepackage{stix}
\usepackage{hyperref}

\allowdisplaybreaks

\begin{document}


\title[Boundedness of integral operators]{Boundedness of $p$-adic Hardy--Hilbert and Erdélyi--Kober fractional integral operators on $p$-adic Ces\`aro function Spaces}

\author[R. Saini]{Rishabh Saini}

\address[R. Saini]{School of Mathematical and Statistical Sciences,  Indian Institute of Technology Mandi, Kamand (H.P.), 175005,  India}
\email{d23030@students.iitmandi.ac.in}

\author[Q. Jahan]{Qaiser Jahan*}
\address[Q. Jahan]{School of Mathematical and Statistical Sciences,  Indian Institute of Technology Mandi, Kamand (H.P.), 175005,  India}
\email{qaiser@iitmandi.ac.in}

\author[S. Ashraf]{Salman Ashraf}

\address[S. Ashraf]{Department of Mathematics, Applied Science Cluster, University of Petroleum and Energy Studies (UPES), Dehradun, Uttarakhand 248007, India}
\email{ashrafsalman869@gmail.com}

 \keywords{$p$-adic field, integral operators, Ces\`aro function spaces.}
\subjclass[2010]{11F85, 47G10}
\begin{abstract}

In this paper, we introduce Ces\`aro function spaces over $p$-adic fields and investigate their fundamental properties, such as the dilation operator and the Minkowski-type integral inequality. We establish boundedness result for $p$-adic Hardy--Hilbert-type integral operators acting on $p$-adic Ces\`aro function spaces, and as an application we derive $p$-adic analogue of the Hardy inequality, the Hilbert inequality, and the Hardy-Littlewood-P\'{o}lya inequality. Furthermore, we define the $p$-adic analogue of the Erd\'elyi–Kober fractional integral operators and prove their boundedness on $p$-adic Ces\`aro function spaces with the help of the obtained boundedness result.
\end{abstract}

\maketitle

\section{Introduction}

The Ces\`aro function spaces $Ces_r(I)$, defined on $I=[0,1]$ or $[0,\infty)$, constitute an important class of Banach function spaces that are not rearrangement-invariant. These spaces arise as continuous analogues of Ces\`aro sequence spaces and became popular, following the problem posed by the Dutch Mathematical Society in 1968, concerning the characterization of their dual spaces \cite{AS1}. Subsequent studies have shown that Ces\`aro spaces exhibit several non classical features: they are neither rearrangement-invariant nor reflexive, and their geometric structure differs substantially from that of Lebesgue spaces. Due to these distinctive properties, Ces\`aro function spaces provide a natural framework for the study of integral operators and for investigating sharp constants in integral inequalities. For a detailed account of their duality theory, geometric structure, and related analytical aspects, we refer to \cite{AI,AS1,AS2,AS3,AS4,AS5,KT,KL2,KL3}.

A fundamental problem in the study of Ces\`aro function spaces concerns the boundedness of integral operators acting on these spaces. In the classical setting on $[0,\infty)$, the Ces\`aro norm is closely connected with Hardy-type inequalities, which has motivated the development of general criteria for operator boundedness. In this direction, Ho \cite{KP} introduced a unified approach based on the mapping properties of the dilation operators together with Minkowski-type inequalities. This method led to boundedness results for several integral operators, including the Hilbert operator, the Erdélyi–Kober fractional integral, and the Mellin fractional transform in the Archimedean setting. 

To the best of our knowledge, Ces\`aro function spaces over the field of $p$-adic numbers have not been studied so far. Consequently, the boundedness theory of integral operators on such spaces remains unknown in the non-Archimedean setting. This naturally leads to the question of whether the classical theory of Ces\`aro spaces and the associated operator theory can be extended to the framework of $p$-adic analysis.
Unlike real numbers, $p$-adic numbers form a non-Archimedean field, where the concepts of distance and convergence differ significantly from those in the Euclidean setting. Because of this distinct structure, function spaces over $p$-adic fields often exhibit behavior that is quite different from their real counterparts. In recent years, the study of integral operators on function spaces over the $p$-adic fields has gained growing interest and has become an active area of research within $p$-adic harmonic analysis (see \cite{SQ1,SQ2,SH1,SH2, BJ1, BJ2, KH1,SA1} and references therein).

The extension of classical theory to the $p$-adic setting is not entirely straightforward. Unlike the Euclidean case, the $p$-adic norm takes only discrete values, and the field $\mathbb{Q}_p$ admits a decomposition into $p$-adic spheres
\[
S_k=\{x\in\mathbb{Q}_p:|x|_p=p^k\}, \quad k\in\mathbb{Z}.
\]
Consequently, many estimates are naturally reduced to sums over these spheres rather than integrals over a continuous range of radii. This feature is reflected in Theorem~\ref{thm:main}, where the boundedness of the operator is characterized by the discrete summability condition
\[
(1-p^{-1})\sum_{j\in\mathbb Z}\mathscr{K}(1,p^j)p^{j(1-1/r)}<\infty,
\]
where $\mathscr{K}$ is the kernel of the Hardy-Hilbert-type integral operator which is defined in Section~4.
The appearance of such a condition is closely related to the valuation structure of the $p$-adic field and plays an important role in the development of the theory presented in this paper.

In this article, we introduce Ces\`aro function spaces $\mathrm{Ces}_{r}(\mathbb{Q}_p)$ over the $p$-adic fields and examine their essential characteristics, such as the dilation operator and the Minkowski-integral inequality. As a main result, we establish the boundedness of Hardy--Hilbert-type integral operators on these spaces which we obtain by using the boundedness of dilation operators on $\mathrm{Ces}_{r}(\mathbb{Q}_p)$ together with the Minkowski integral inequality in this setting. As an application of the main result, we provide the boundedness of $p$-adic Hardy operator, $p$-adic Hilbert operator and $p$-adic Hardy--Littlewood--P\'{o}lya operator on $\mathrm{Ces}_{r}(\mathbb{Q}_p).$

Furthermore, we introduce the $p$-adic analogue of the Erd\'elyi–Kober fractional integral and as an application of the boundedness of Hardy–Hilbert-type integral operators, we establish the boundedness of the $p$-adic Erd\'elyi–Kober fractional integral on $p$-adic Ces\`aro function spaces. We refer \cite{KVS,SI} to the study of Erdélyi–Kober fractional integral and its applications in fractional calculus in detail. 

The paper is organized as follows. In Section~2, we recall the necessary preliminaries from $p$-adic analysis. In Section~3, we introduce the $p$-adic Ces\`aro function spaces and the dilation operator on $\mathbb{Q}_p$, and establish several basic properties needed for subsequent arguments. The main boundedness theorem for integral operators on $p$-adic Ces\`aro function spaces is presented in Section~4. In Section~5, we apply this general result to recover Hardy-type, Hilbert-type, and Hardy--Polya-type inequalities in the $p$-adic setting. In the final section, we study the $p$-adic Erd\'elyi--Kober fractional integral operator and prove its boundedness as a consequence of the main theorem.

\section{Preliminaries}\label{ch4_S2}

In this section, we discuss the basic definitions of $p$-adic fields which we will be using throughout the article. For detailed discussion and proofs we refer \cite{BJ3, book_impulse6, Vl}.

Let $p$ be a prime number. The field of $p$-adic numbers, denoted by $\mathbb{Q}_p$, is obtained by completing the rational numbers $\mathbb{Q}$ with respect to the $p$-adic absolute value $|\cdot|_p$.
Given a nonzero rational number $x$, one can write
\[
x = p^{\gamma}\frac{m}{n},
\]
where $\gamma\in\mathbb{Z}$ and $m,n\in\mathbb{Z}$ are not divisible by $p$. The $p$-adic absolute value is then defined by
\[
|x|_p = p^{-\gamma},
\]
and by convention $|0|_p=0$.

A characteristic feature of the $p$-adic norm is that it is non-Archimedean. In particular, it satisfies the ultrametric inequality
\[
|x+y|_p \le \max\{|x|_p,|y|_p\}, \qquad x,y\in\mathbb{Q}_p.
\]
As a consequence, whenever $|x|_p$ and $|y|_p$ are distinct, the larger one dominates and
\[
|x+y|_p = \max\{|x|_p,|y|_p\}.
\]

Every nonzero $p$-adic number admits a unique expansion of the form
\begin{equation}\label{ch4_s1}
x = p^{\gamma}\sum_{j=0}^{\infty} a_j p^{j},
\end{equation}
where $\gamma\in\mathbb{Z}$, $a_0\in\{1,\ldots,p-1\}$, and $a_j\in\{0,\ldots,p-1\}$ for $j\ge1$. The 
 series in \eqref{ch4_s1} converges with respect to the $p$-adic metric, since the terms $p^{\gamma}a_jp^j$ tend to zero in $|\cdot|_p$ as $j\to\infty$. For $a\in\mathbb{Q}_p$ and $\ell\in\mathbb{Z}$, we define the closed $p$-adic ball and the corresponding sphere by
\[
B^{\ell}(a)
=
\{x\in\mathbb{Q}_p : |x-a|_p \le p^{\ell}\},
\qquad
S^{\ell}(a)
=
\{x\in\mathbb{Q}_p : |x-a|_p = p^{\ell}\}.
\]
When the center is the origin, we simply write $B^{\ell}=B^{\ell}(0)$ and $S^{\ell}=S^{\ell}(0)$. With this notation, the field $\mathbb{Q}_p$ decomposes as
\[
\mathbb{Q}_p = \bigcup_{\ell\in\mathbb{Z}} B^{\ell},
\qquad
\mathbb{Q}_p^\times = \mathbb{Q}_p\setminus\{0\}
= \bigcup_{\ell\in\mathbb{Z}} S^{\ell}.
\]

Since $\mathbb{Q}_p$ is a locally compact Abelian group under addition, there exists a translation-invariant Haar measure on $\mathbb{Q}_p$, which we denote by $dx$. We fix the normalization by requiring that the unit ball has measure one, namely
\[
\int_{B^{0}} dx = 1.
\]
For any measurable set $E\subset\mathbb{Q}_p$, we write $|E|$ for its Haar measure. With the above normalization, the measures of balls and spheres are given by
\[
|B^{\ell}| = p^{\ell},
\qquad
|S^{\ell}| = p^{\ell}(1-p^{-1}),
\quad \ell\in\mathbb{Z}.
\]

We finally recall some basic scaling properties of the Haar measure, which are crucial for the analysis of dilation-invariant operators. For any $a\in\mathbb{Q}_p^\times$ and any integrable function  $f:\mathbb{Q}_p\to\mathbb{C}$, the change of variables $x=ay$ yields
\[
\int_{\mathbb{Q}_p} f(ax)\,dx
=
|a|_p^{-1}
\int_{\mathbb{Q}_p} f(x)\,dx.
\]
Equivalently, for any measurable set $E\subset\mathbb{Q}_p$, one has
\[
|aE| = |a|_p\,|E|,
\qquad
aE = \{ax : x\in E\}.
\]
In particular, multiplication by $a$, acts as a dilation on balls,
satisfying
\[
aB^{\ell} = B^{\,\ell+v_p(a)},
\]
where $v_p(a)$ denotes the $p$-adic valuation of $a$.

These scaling relations express the homogeneity of the Haar measure with respect to $p$-adic dilations and will be repeatedly used in the sequel, especially in the study of Hardy-Hilbert type operators, and fractional integral operators on $\mathbb{Q}_p$.

\section{\texorpdfstring{ Ces\`aro function spaces over $p$}{p}-adic field} 
In this section, we introduce the Ces\`aro function spaces over $p$-adic fields. We establish a bound for the dilation operator and Minkowski-type integral inequality in the setting of $p$-adic Ces\`aro function spaces. These foundational results play a crucial role in the proof of the main results presented in the following sections.

\begin{definition}
    Let $ 1 \leq r < \infty.$ The $p$-adic Lebesgue space  $L^r(\mathbb{Q}_p)$ consists of all measurable functions $f$ on $\mathbb{Q}_p$ such that 
\begin{align*}
\|f\|_{L^r(\mathbb{Q}_p)} = \Bigg( \int_{\mathbb{Q}_p}|f(x)|^r \di x\Bigg)^{1/r} < \infty.
\end{align*}
\end{definition}

The space $L_{\mathrm{loc}}^r(\mathbb{Q}_p)$ is defined as the set of all measurable functions $f$ on $\mathbb{Q}_p$ satisfying $\int_{K}|f(x)|^r \di x < \infty,$ for any compact subset $K$ of $\mathbb{Q}_p.$

\begin{definition} \label{Def:1} Let $1 \leq r < \infty.$ The $p$-adic Ces\`aro function spaces $\mathrm{Ces}_{r}(\mathbb{Q}_p)$ consists of all measurable functions $f$ on $\mathbb{Q}_p$ such that

\begin{align} \label{Ces:norm1}
    \|f\|_{\mathrm{Ces}_{r}(\mathbb{Q}_p)}=\|\mathscr{H}^pf\|_{L^{r}(\mathbb{Q}_p)}=\left( \int_{\mathbb{Q}_p} \left( \frac{1}{|x|_p} \int_{|y|_p \leq |x|_p} |f(y)|\, \di y \right)^r \di x\right)^{1/r} < \infty.
\end{align}
\end{definition}

For any locally integrable function $f$ on $\mathbb{Q}_p,$ the $p$-adic Hardy operator is defined as 
\begin{align} \label{op:Hardy}
\mathscr{H}^p f(x) = \frac{1}{|x|_p} \int_{|y|_p \leq |x|_p} f(y) \di y,~~~  x \in \mathbb{Q}^*_p.
\end{align}
$p$-adic Hardy inequality established in \cite{Fu} (see Corollary 2.2) yields the embedding $L^{r}(\mathbb{Q}_p) \hookrightarrow \mathrm{Ces}_{r},~~~~~ r \in (1, \infty).$

We now study the dialation operator on $p$-adic Ces\`aro function spaces. For any measurable function $f$ on $\mathbb{Q}_p$ and $a \in \mathbb{Q}^*_p,$ the dilation operator is defined as:
\begin{align*}
    (D_a f)(x)=f(ax),\qquad x\in\mathbb{Q}_p .
\end{align*}

\begin{lemma}\label{lem:first}
Let $1 \leq r < \infty$ and   $a\in\mathbb{Q}^*_p$. We have
\begin{align} 
 \|D_a f\|_{\mathrm{Ces}_{r}(\mathbb{Q}_p)}=|a|_p^{-1/r}
\|f\|_{\mathrm{Ces}_{r}(\mathbb{Q}_p)}.
\end{align}
\end{lemma}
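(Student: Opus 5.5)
The plan is to compute $\mathscr{H}^p(D_a f)$ directly and show that it is itself a dilation of $\mathscr{H}^p f$; the scaling of the $L^r$ norm under dilation then gives the factor. Since the Cesàro norm involves only $|f|$ and $|D_a f| = D_a|f|$, we may assume $f\ge 0$ throughout.

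\textbf{Step 1: the Hardy operator commutes with dilations.} Fix $x\in\mathbb{Q}_p^*$. In the inner integral
\[
\int_{|y|_p\le |x|_p} f(ay)\,\mathrm{d}y
\]
substitute $z=ay$. By the scaling relation of Section~\ref{ch4_S2}, $\mathrm{d}y=|a|_p^{-1}\mathrm{d}z$. Multiplicativity of the $p$-adic absolute value turns the condition $|y|_p\le|x|_p$ into $|z|_p\le |a|_p|x|_p=|ax|_p$. Hence
\[
\mathscr{H}^p(D_af)(x)=\frac{1}{|x|_p}\,|a|_p^{-1}\int_{|z|_p\le|ax|_p} f(z)\,\mathrm{d}z=\frac{1}{|ax|_p}\int_{|z|_p\le|ax|_p} f(z)\,\mathrm{d}z=(\mathscr{H}^pf)(ax).
\]
In operator form, $\mathscr{H}^p D_a = D_a\mathscr{H}^p$ on nonnegative measurable functions. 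This holds even when the integrals are infinite, so no integrability assumption is needed.

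\textbf{Step 2: scaling of the $L^r$ norm.} By the definition of the Cesàro norm,
\[
\|D_af\|_{\mathrm{Ces}_r}^r=\int_{\mathbb{Q}_p}\big((\mathscr{H}^pf)(ax)\big)^r\,\mathrm{d}x .
\]
Apply the change of variables $u=ax$, i.e. $\int f(ax)\,\mathrm{d}x=|a|_p^{-1}\int f(u)\,\mathrm{d}u$, to the nonnegative function $(\mathscr{H}^pf)^r$. This gives $|a|_p^{-1}\|f\|_{\mathrm{Ces}_r}^r$, and taking $r$-th roots yields the claim. The point $x=0$, where $\mathscr{H}^p f$ is undefined, is a null set and can be ignored.

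\textbf{Main obstacle.} The argument is essentially a computation. The only point needing care is justifying the change of variables for nonnegative measurable functions that are not assumed integrable, which the paper stated only for integrable $f$. I would handle this by monotone convergence: approximate by the integrable truncations $\min(f,n)\chi_{B^n}$ and pass to the limit. With that, the identity holds in $[0,\infty]$, so both sides are finite or infinite together.
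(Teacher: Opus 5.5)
Your proof is correct and follows the paper's argument exactly. You first show $\mathscr{H}^p(D_af)(x)=(\mathscr{H}^pf)(ax)$ via the substitution $z=ay$, then apply the change of variables $u=ax$ in the $L^r$ integral to extract the factor $|a|_p^{-1/r}$; your extra care (reducing to $f\ge 0$ since the norm involves $|f|$, and justifying the change of variables for nonnegative non-integrable functions by monotone convergence) is sound and simply makes explicit what the paper leaves implicit.
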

\begin{proof}
Recall that the $p$-adic Hardy operator is given by
$$\mathscr{H}^p f(t)=\frac{1}{|t|_p} \int_{|y|_p \leq |t|_p} f(y)\, \di y,
\qquad t\in\mathbb{Q}^*_p.$$
Therefore,
$$\mathscr{H}^p(D_a f)(t)=\frac{1}{|t|_p}\int_{|y|_p \leq |t|_p} f(ay) \di y.$$

Making the substitution $u=ay$ and using $\di (ay)=|a|_p \di y$, we obtain
$$\mathscr{H}^p(D_a f)(t)=\frac{1}{|t|_p|a|_p}\int_{|u|_p \leq |a|_p|t|_p} f(u) \di u.$$

Noting that $|a||t|_p=|at|_p$, we get
\begin{align} \label{Equality1}
    \mathscr{H}^p(D_a f)(t)=\mathscr{H}^p f(at).
\end{align}

By using \eqref{Equality1}, we have
\begin{align*}
\|D_a f\|_{\mathrm{Ces}_r(\mathbb{Q}_p)} = \left(\int_{\mathbb{Q}_p}
|\mathscr{H}^p(D_a f)(x)|^r \di x \right)^{1/r}=\left(\int_{\mathbb{Q}_p}
|\mathscr{H}^p f(ax)|^r \di x\right)^{1/r}.
\end{align*}

Again, by using the change of variable $q=ax$, and $\di (ax)=|a|_p \di x$, hence
\begin{align*}
\|D_a f\|_{\mathrm{Ces}_r(\mathbb{Q}_p)}=\left(|a|_p^{-1}\int_{\mathbb{Q}_p}
|\mathscr{H}^p f(q)|^r \di q \right)^{1/r}=|a|_p^{-1/r}\left(\int_{\mathbb{Q}_p}
|\mathscr{H}^p f(q)|^r \di q \right)^{1/r}
\end{align*}

Therefore,
\begin{align*}
   \|D_a f\|_{\mathrm{Ces}_r(\mathbb{Q}_p)}=|a|_p^{-1/r} \|f\|_{\mathrm{Ces}_r(\mathbb{Q}_p)}. 
\end{align*}

\end{proof}

The following result gives the Minkowski type integral inequality on the $p$-adic Ces\`aro function spaces.

\begin{lemma} \label{lem:Second}
Let $r\in[1,\infty).$ For any measurable function $F$ on $\mathbb{Q}_p\times\mathbb{Q}_p,$ we have
\begin{align}
 \left\|\int_{\mathbb{Q}_p} F(y,\cdot) \di y\right\|_{\mathrm{Ces}_r(\mathbb{Q}_p)} \leq \int_{\mathbb{Q}_p} \|F(y,\cdot)\|_{\mathrm{Ces}_r(\mathbb{Q}_p)} \di y.
\end{align}
\end{lemma}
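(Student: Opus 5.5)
The plan is to reduce the statement to the classical Minkowski integral inequality in $L^r(\mathbb{Q}_p)$, using that the Ces\`aro norm is the $L^r$ norm of a positive sublinear majorant. Set $G(x)=\int_{\mathbb{Q}_p}F(y,x)\,\di y$. By Definition~\ref{Def:1},
\[
\|G\|_{\mathrm{Ces}_r(\mathbb{Q}_p)}=\left\|\frac{1}{|x|_p}\int_{|t|_p\le|x|_p}|G(t)|\,\di t\right\|_{L^r(\mathbb{Q}_p)}.
\]
The first step is the pointwise bound $|G(t)|\le\int_{\mathbb{Q}_p}|F(y,t)|\,\di y$. Insert it into the inner integral and apply Tonelli's theorem on $\mathbb{Q}_p\times\mathbb{Q}_p$ to the nonnegative measurable function $(y,t)\mapsto|F(y,t)|\chi_{\{|t|_p\le|x|_p\}}$. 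For every $x\in\mathbb{Q}_p^*$ this gives
\[
\mathscr{H}^p|G|(x)\le\int_{\mathbb{Q}_p}\frac{1}{|x|_p}\int_{|t|_p\le|x|_p}|F(y,t)|\,\di t\,\di y=\int_{\mathbb{Q}_p}\mathscr{H}^p|F(y,\cdot)|(x)\,\di y .
\]

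The second step takes $L^r$ norms in $x$ on both sides. Monotonicity of the $L^r$ norm handles the left side. On the right I apply the standard Minkowski integral inequality in $L^r(\mathbb{Q}_p)$, with $\mathbb{Q}_p$ equipped with Haar measure; this measure space is $\sigma$-finite because $\mathbb{Q}_p=\bigcup_\ell B^\ell$. The result is
\[
\left\|\int_{\mathbb{Q}_p}\mathscr{H}^p|F(y,\cdot)|\,\di y\right\|_{L^r}\le\int_{\mathbb{Q}_p}\big\|\mathscr{H}^p|F(y,\cdot)|\big\|_{L^r}\,\di y=\int_{\mathbb{Q}_p}\|F(y,\cdot)\|_{\mathrm{Ces}_r(\mathbb{Q}_p)}\,\di y .
\]
For $r=1$ this is just Tonelli. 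For $r>1$ one may quote it, or prove it by duality: test against $g\ge0$ with $\|g\|_{L^{r'}}\le1$ and use Tonelli together with H\"older.

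The main obstacle is only measure-theoretic bookkeeping. I must check that $(y,x)\mapsto\mathscr{H}^p|F(y,\cdot)|(x)$ is jointly measurable. This follows from Tonelli, since it is the integral in $t$ of the nonnegative measurable function $|F(y,t)|\chi_{\{|t|_p\le|x|_p\}}|x|_p^{-1}$ on the triple product. I must also handle the case where $\int F(y,x)\,\di y$ fails to exist for some $x$. If the right-hand side of the lemma is infinite there is nothing to prove. Otherwise I interpret $G$ wherever the integral converges absolutely, which is almost everywhere in the relevant sense. The set $\{0\}$ has measure zero, so excluding $x=0$ from the definition of $\mathscr{H}^p$ is harmless.
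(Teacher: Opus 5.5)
Your proposal is correct and follows the paper's proof essentially step for step. Both arguments use the pointwise bound $\bigl|\int F(y,t)\,dy\bigr|\le\int|F(y,t)|\,dy$, then Tonelli to interchange the $t$- and $y$-integrals, then Minkowski's integral inequality in $L^r(\mathbb{Q}_p)$, and finally identify the result with $\int\|F(y,\cdot)\|_{\mathrm{Ces}_r(\mathbb{Q}_p)}\,dy$. Your extra bookkeeping fills in details the paper leaves implicit: $\sigma$-finiteness, joint measurability of $(y,x)\mapsto\mathscr{H}^p|F(y,\cdot)|(x)$, and a.e.\ existence of $\int F(y,\cdot)\,dy$ when the right-hand side is finite.
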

\begin{proof}
According to Definition \ref{Def:1} and applying Minkowski's integral inequality\cite{BJ3} (see Theorem 1.42), we get
\begin{align*}
\left\|
\int_{\mathbb{Q}_p} F(y,\cdot) \di y \right\|_{\mathrm{Ces}_r(\mathbb{Q}_p)} &=\left(\int_{\mathbb{Q}_p} \left(\frac{1}{|x|_p}\int_{|t|_p \leq |x|_p}
\left|\int_{\mathbb{Q}_p} F(y,t) \di y\right| \di t\right)^r \di x \right)^{1/r}
\\[1ex]
&\le \left(\int_{\mathbb{Q}_p} \left(\frac{1}{|x|_p}\int_{|t|_p \leq |x|_p} \int_{\mathbb{Q}_p} |F(y,t)| \di y \di t\right)^r \di x \right)^{1/r}
\\[1ex]
&= \left(\int_{\mathbb{Q}_p} \left(\int_{\mathbb{Q}_p}\frac{1}{|x|_p}\int_{|t|_p \leq |x|_p} |F(y,t)| \di t \di y\right)^r \di x \right)^{1/r} 
\\[1ex]
& \leq\int_{\mathbb{Q}_p}\left(\int_{\mathbb{Q}_p} \left(\frac{1}{|x|_p}
\int_{|t|_p \leq |x|_p} |F(y,t)|\di t \right)^r\di x\right)^{1/r}\di y
\\[1ex]
&=\int_{\mathbb{Q}_p}\|F(y,\cdot)\|_{\mathrm{Ces}_r(\mathbb{Q}_p)}\di y.
\end{align*}

\end{proof}

\section{$p$-adic Hardy--Hilbert-type integral operators on Ces\`aro function spaces}
We now turn to the study of integral operators induced by homogeneous kernel acting on $p$-adic Ces\`aro function spaces. The Hardy--Hilbert-type integral operator $\mathscr{T}^p,$ over the $p$-adic field, is defined as
\begin{align}\label{tf:op}
\mathscr{T}^pf(x) = \int_{\mathbb{Q}_p^*} \mathscr{K}\bigl(|x|_p,|y|_p\bigr) f(y) \di y,\qquad x \in \mathbb{Q}_p^*.
\end{align}
where $\mathscr{K}(x,y)$ is a non-negative measurable function $(0,\infty)\times(0,\infty)$ satisfying 
\begin{align} \label{Homoge}
    \mathscr{K}(\tau x,\tau y) = \tau^{-1}\mathscr{K}(x,y), \qquad \tau >0.
\end{align}

In \cite{4}, Li and Jin introduced the operator $\mathscr{T}^p$ and investigated its boundedness on weighted Lebesgue spaces. Later, in \cite{SH1,KH1,KH2}, the boundedness of $\mathscr{T}^p$ was further examined in several other settings, including $p$-adic block spaces, two-weighted Morrey spaces, Morrey–Herz spaces and weighted Triebel–Lizorkin spaces.

The following theorem establishes the boundedness result for $p$-adic Hardy--Hilbert-type integral operators on Ces\`aro function spaces.

\begin{theorem} \label{thm:main}
Let $1< r < \infty.$ If the kernel $\mathscr{K}$ satisfies 
\begin{equation} \label{thm:main1}
C_{p,r}:=(1-p^{-1})\sum_{j\in\mathbb{Z}}\mathscr{K}(1,p^j)\,p^{j(1-1/r)}  < \infty.
\end{equation}
Then, for any $f\in \mathrm{Ces}_r(\mathbb{Q}_p)$, we have
\begin{align*}
    \|\mathscr{T}^pf\|_{\mathrm{Ces}_r(\mathbb{Q}_p)}\leq C_{p,r}\|f\|_{\mathrm{Ces}_r(\mathbb{Q}_p)},
\end{align*}
where $\mathscr{T}^p$ is the operator in \eqref{tf:op}.
\end{theorem}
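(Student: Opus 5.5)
The plan is to write $\mathscr{T}^p f$ as a superposition of dilations of $f$, and then combine the Minkowski-type inequality of Lemma~\ref{lem:Second} with the dilation identity of Lemma~\ref{lem:first}.

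\textbf{Step 1: rewrite $\mathscr{T}^pf$ as an integral of dilations.} Fix $x\in\mathbb{Q}_p^*$ and substitute $y=xt$, so that $\di y=|x|_p\,\di t$. Homogeneity \eqref{Homoge} with $\tau=|x|_p$ gives
\[
\mathscr{K}(|x|_p,|x|_p|t|_p)=|x|_p^{-1}\mathscr{K}(1,|t|_p).
\]
Hence
\[
\mathscr{T}^pf(x)=\int_{\mathbb{Q}_p^*}\mathscr{K}(1,|t|_p)\,f(xt)\,\di t=\int_{\mathbb{Q}_p^*}\mathscr{K}(1,|t|_p)\,(D_tf)(x)\,\di t .
\]
The factor $|x|_p$ from the measure cancels the factor $|x|_p^{-1}$ from homogeneity. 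This is the $p$-adic analogue of Ho's representation.

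\textbf{Step 2: reduce to nonnegative $f$.} I will first replace $f$ by $|f|$. We have $|\mathscr{T}^pf|\le \mathscr{T}^p|f|$ pointwise because $\mathscr{K}\ge0$. The Ces\`aro norm depends only on $|f|$ and is monotone. So it suffices to treat $f\ge0$, and then every integrand is nonnegative and Tonelli applies freely.

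\textbf{Step 3: apply the two lemmas.} Take $F(t,x)=\mathscr{K}(1,|t|_p)f(xt)$ in Lemma~\ref{lem:Second}. Then Lemma~\ref{lem:first} gives
\[
\|\mathscr{T}^pf\|_{\mathrm{Ces}_r}\le\int_{\mathbb{Q}_p^*}\mathscr{K}(1,|t|_p)\,\|D_tf\|_{\mathrm{Ces}_r}\,\di t=\|f\|_{\mathrm{Ces}_r}\int_{\mathbb{Q}_p^*}\mathscr{K}(1,|t|_p)\,|t|_p^{-1/r}\,\di t .
\]

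\textbf{Step 4: evaluate the constant.} Decompose $\mathbb{Q}_p^*=\bigcup_j S^j$ and use $|S^j|=(1-p^{-1})p^j$. This gives
\[
\int_{\mathbb{Q}_p^*}\mathscr{K}(1,|t|_p)|t|_p^{-1/r}\,\di t=(1-p^{-1})\sum_{j\in\mathbb{Z}}\mathscr{K}(1,p^j)\,p^{j(1-1/r)}=C_{p,r},
\]
which is finite by \eqref{thm:main1}. The same computation also shows that the integral defining $\mathscr{T}^pf(x)$ converges absolutely for almost every $x$ whenever $f\in\mathrm{Ces}_r$.

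\textbf{Main obstacle.} The only delicate point is Step 1, together with the measurability and joint-measurability issues needed to invoke Lemma~\ref{lem:Second}. Concretely, one must check that the change of variables $y=xt$ is legitimate for each fixed $x\ne0$. One must also check that $(t,x)\mapsto f(xt)$ is measurable on $\mathbb{Q}_p\times\mathbb{Q}_p$; it is, since it is a composition with the continuous multiplication map. Finally, the set $t=0$ (respectively $x=0$) has measure zero and may be ignored. Once these points are in place, the rest is bookkeeping over spheres.
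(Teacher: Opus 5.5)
Your proposal is correct and follows the paper's proof essentially line for line: the substitution $y=xt$ together with homogeneity turns $\mathscr{T}^pf$ into $\int_{\mathbb{Q}_p^*}\mathscr{K}(1,|t|_p)\,D_tf\,dt$, after which Lemma~\ref{lem:Second}, Lemma~\ref{lem:first}, and the decomposition of $\mathbb{Q}_p^*$ into spheres give exactly $C_{p,r}$. Your reduction to $f\ge 0$ and your remark that $\mathscr{T}^pf$ then converges absolutely a.e. are harmless refinements that the paper omits; for the joint measurability of $(t,x)\mapsto f(xt)$, it is cleanest to pass to a Borel representative of $f$.
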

\begin{proof} 
After the change of variable $y=\xi x$ and using $\di y=|x|_p\di \xi$
\begin{align*}
\mathscr{T}^pf(x) &= \int_{\mathbb{Q}_p^*} \mathscr{K}\bigl(|x|_p,|\xi x|_p\bigr) f(\xi x) |x|_p \di \xi \\
&= \int_{\mathbb{Q}_p^*} \mathscr{K}\bigl(|x|_p,|\xi|_p|x|_p\bigr) f(\xi x) |x|_p \di \xi \\
&= \int_{\mathbb{Q}_p^*}|x|^{-1}_p \mathscr{K}\bigl(1,|\xi|_p\bigr) f(\xi x) |x|_p \di \xi \\
&= \int_{\mathbb{Q}_p^*} \mathscr{K}\bigl(1,|\xi|_p\bigr) D_\xi f( x) \di \xi \\
\end{align*}
Applying $\|\cdot\|_{\mathrm{Ces}_r(\mathbb{Q}_p)}$ and by using Lemma \ref{lem:first} and Lemma \ref{lem:Second}, we get 
\begin{align} \nonumber
  \|\mathscr{T}^pf\|_{\mathrm{Ces}_r(\mathbb{Q}_p)} &\leq \int_{\mathbb{Q}_p^*} \mathscr{K}\bigl(1,|\xi|_p\bigr) \|D_\xi f( x)\|_{\mathrm{Ces}_r(\mathbb{Q}_p)} \di \xi \\ \nonumber
   &= \int_{\mathbb{Q}_p^*} \mathscr{K}\bigl(1,|\xi|_p\bigr) |\xi|^{-1/r}_p\|f\|_{\mathrm{Ces}_r(\mathbb{Q}_p)} \di \xi \\ \label{main:eq1}
   &= \|f\|_{\mathrm{Ces}_r(\mathbb{Q}_p)}\int_{\mathbb{Q}_p^*} \mathscr{K}\bigl(1,|\xi|_p\bigr) |\xi|^{-1/r}_p \di \xi.
\end{align}
Since $\mathbb{Q}_p^*$ can be written as a disjoint union $\bigcup_{j=-\infty}^{\infty} S^j$ where $S^j=\{x\in\mathbb{Q}_p : |x|_p = p^j\}$ and using the fact that $|S^j| = p^j(1-p^{-1}),$ we have 

\begin{align} \nonumber
\int_{\mathbb{Q}_p^*} \mathscr{K}\bigl(1,|\xi|_p\bigr) |\xi|^{-1/r}_p \di \xi &= \sum_{j\in\mathbb{Z}} \int_{S^j} \mathscr{K}\bigl(1,|\xi|_p\bigr) |\xi|^{-1/r}_p \di \xi\\ \nonumber
&= \sum_{j\in\mathbb{Z}} \mathscr{K}\bigl(1,p^j\bigr) p^{-j/r} |S^j|\\ \label{main:eq2}
&=(1-p^{-1})\sum_{j\in\mathbb{Z}} \mathscr{K}(1,p^j)\,p^{j\left(1-\frac{1}{r}\right)}.
\end{align}
Therefore, by \eqref{main:eq1} and \eqref{main:eq2}, we get
\begin{align*}
    \|\mathscr{T}^pf\|_{\mathrm{Ces}_r(\mathbb{Q}_p)}\leq C_{p,r}\|f\|_{\mathrm{Ces}_r(\mathbb{Q}_p)},
\end{align*}
which completes the proof.
\end{proof}

\begin{remark}
The condition
\[
(1-p^{-1})\sum_{j\in\mathbb Z}\mathscr{K}(1,p^j)p^{j(1-1/r)}<\infty
\]
arises naturally from the decomposition of $\mathbb{Q}_p$ into $p$-adic spheres. Since the $p$-adic norm takes only discrete values, estimates involving the kernel $\mathscr{K}$ reduce to summability conditions over these spheres. In contrast, analogous boundedness criteria in the classical setting are typically formulated in terms of integral conditions on the kernel.
Furthermore, Theorem~\ref{thm:main} provides a unified criterion for the boundedness of a broad class of integral operators on $\mathrm{Ces}_r(\mathbb{Q}_p)$. As applications of this result, we establish the boundedness of the $p$-adic Hardy operator, the $p$-adic Hilbert operator, the Hardy--Littlewood--P'olya operator, and the Erd'elyi--Kober fractional integral operator on $\mathrm{Ces}_r(\mathbb{Q}_p)$.
\end{remark}

\section{Applications of $p$-adic Hardy--Hilbert-type integral operators}
In this section, we establish the Hardy inequality, the Hilbert inequality, and the Hardy--Littlewood--P\'{o}lya inequality on $p$-adic Ces\`aro function spaces by choosing a particular kernel in \eqref{tf:op}.

If we choose the kernel $\mathscr{K}(|x|_p,|y|_p)=|x|_p^{-1}\chi_{\{y\in\mathbb{Q}_p: |y|_p \leq |x|_p \}},$ then the operator $\mathscr{T}^p$ reduces to the $p$-adic Hardy operator defined in \eqref{op:Hardy}. Hence, we have the following $p$-adic Hardy inequality for $\mathrm{Ces}_r(\mathbb{Q}_p).$

\begin{theorem}
If $r>1$, then for any $f \in \mathrm{Ces}_r(\mathbb{Q}_p)$, we have
\begin{align*}
\|\mathscr{H}^p f\|_{\mathrm{Ces}_r(\mathbb{Q}_p)} \leq
\frac{1-p^{-1}}{1-p^{-(1-1/r)}} \|f\|_{\mathrm{Ces}_r(\mathbb{Q}_p)}.
\end{align*}

\end{theorem}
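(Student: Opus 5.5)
We obtain this as a direct specialization of Theorem~\ref{thm:main}. Take the kernel $\mathscr{K}(s,t)=s^{-1}\chi_{\{t\le s\}}$ on $(0,\infty)\times(0,\infty)$. It is non-negative and measurable. It is also homogeneous of degree $-1$: for $\tau>0$,
\[
\mathscr{K}(\tau s,\tau t)=(\tau s)^{-1}\chi_{\{\tau t\le \tau s\}}=\tau^{-1}\mathscr{K}(s,t),
\]
so \eqref{Homoge} holds. With this kernel,
\[
\mathscr{T}^pf(x)=\frac{1}{|x|_p}\int_{0<|y|_p\le|x|_p} f(y)\,\di y ,
\]
and since the single point $\{0\}$ has Haar measure zero, this coincides with $\mathscr{H}^pf(x)$ from \eqref{op:Hardy} for every $x\in\mathbb{Q}_p^*$.

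Next we compute the constant $C_{p,r}$ in \eqref{thm:main1}. We have $\mathscr{K}(1,p^j)=\chi_{\{p^j\le 1\}}$, which equals $1$ when $j\le 0$ and $0$ otherwise. Writing $j=-k$ with $k\ge0$,
\[
C_{p,r}=(1-p^{-1})\sum_{k=0}^{\infty}p^{-k(1-1/r)} .
\]
This is a geometric series with ratio $p^{-(1-1/r)}$. Because $r>1$, the exponent $1-1/r$ is positive, so the ratio is strictly less than $1$. The series therefore converges, and
\[
C_{p,r}=\frac{1-p^{-1}}{1-p^{-(1-1/r)}}.
\]
In particular \eqref{thm:main1} is satisfied, and Theorem~\ref{thm:main} gives $\|\mathscr{H}^pf\|_{\mathrm{Ces}_r(\mathbb{Q}_p)}\le C_{p,r}\|f\|_{\mathrm{Ces}_r(\mathbb{Q}_p)}$ with exactly the stated constant.

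The only point needing care is the identification $\mathscr{T}^p=\mathscr{H}^p$. The paper writes the kernel as $|x|_p^{-1}\chi_{\{|y|_p\le|x|_p\}}$, so we must check it is a genuine function of $(|x|_p,|y|_p)$, which it is. We must also check that $\mathscr{H}^pf(x)$ is well defined for $f\in\mathrm{Ces}_r(\mathbb{Q}_p)$. This holds because finiteness of the Ces\`aro norm forces $\int_{|y|_p\le |x|_p}|f|<\infty$ for almost every $x$, hence for every ball $B^\ell$ since $\mathscr{H}^p|f|$ is constant on spheres. Otherwise the argument is a direct computation, and the hypothesis $r>1$ is exactly what makes the geometric series converge.
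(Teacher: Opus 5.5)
Your proposal is correct and is essentially the paper's proof: you specialize Theorem~\ref{thm:main} to the kernel $\mathscr{K}(s,t)=s^{-1}\chi_{\{t\le s\}}$, check the homogeneity condition \eqref{Homoge}, and evaluate $C_{p,r}$ as the geometric series $(1-p^{-1})\sum_{j\le 0}p^{j(1-1/r)}$, which converges precisely because $r>1$. The paper leaves implicit the two extra points you verify, and both are correct: discarding the null set $\{0\}$ makes $\mathscr{T}^p$ coincide with $\mathscr{H}^p$, and $f\in\mathrm{Ces}_r(\mathbb{Q}_p)$ is integrable on every ball because $\mathscr{H}^p|f|$ is constant on spheres of positive measure.
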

\begin{proof}
Let $\mathscr{K}(|x|_p,|y|_p)=|x|_p^{-1}\chi_{\{\,y\in\mathbb{Q}_p:\,|y|_p\le |x|_p\,\}}.$ Clearly, kernal satisfies the homogeneity condition \eqref{Homoge} and
\begin{align*}
    C_{p,r}&=(1-p^{-1})\sum_{j\in\mathbb{Z}}\mathscr{K}(1,p^j) p^{j(1-1/r)} \\
    &=(1-p^{-1})\sum_{j\leq 0}p^{j(1-1/r)} \\
   & =(1-p^{-1})\sum_{j=0}^{\infty}p^{-j(1-1/r)} \\
   &= \frac{1-p^{-1}}{1-p^{-(1-1/r)}} < \infty, \qquad \text{since} \quad r>1.
\end{align*}
Therefore, result follows from Theorem \ref{thm:main}.
\end{proof}

\begin{remark}
It is worth noting that the constants obtained in the above inequalities
differ from those in the classical Archimedean setting studied by Ho \cite{KP}. This difference arises from the distinct geometry of $\mathbb{Q}_p$, especially from the ultrametric property, and the discrete nature of the $p$-adic norm. 

\end{remark}

If we choose the kernal
\begin{align} \label{Hilbert}
 \mathscr{K}(|x|,|y|)=\frac{1}{|x|_p+|y|_p}
\end{align}
in \eqref{tf:op}, then the operator $\mathscr{T}^p$ reduces to the $p$-adic Hilbert operator. More precisely, for a measurable function $f$ defined on $\mathbb{Q}_p$, we
have
\begin{align*}
    \mathscr{H}f(x)=\int_{\mathbb{Q}_p^*}\frac{f(y)}{|x|_p+|y|_p}\,\di (x),
\qquad x\in\mathbb{Q}_p^*.
\end{align*}
In the following result, we establish  Hilbert-type inequality for $p$-adic Ces\`aro function spaces $\mathrm{Ces}_r(\mathbb{Q}_p)$.

\begin{theorem}
    Let $1< r < \infty$. For any $f \in \mathrm{Ces}_r(\mathbb{Q}_p)$, we have
\begin{align*}
    \|\mathscr{H}f\|_{\mathrm{Ces}_r(\mathbb{Q}_p)}
\le C_{p,r} \|f\|_{\mathrm{Ces}_r(\mathbb{Q}_p)}.
\end{align*}
\end{theorem}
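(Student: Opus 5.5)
The plan is to apply Theorem~\ref{thm:main} with the kernel $\mathscr{K}(s,t)=\frac{1}{s+t}$ from \eqref{Hilbert}. There are two things to check: that this kernel satisfies the homogeneity condition \eqref{Homoge}, and that the constant
\begin{align*}
C_{p,r}=(1-p^{-1})\sum_{j\in\mathbb{Z}}\frac{p^{j(1-1/r)}}{1+p^j}
\end{align*}
is finite for every $1<r<\infty$. Once both hold, the inequality is exactly the conclusion of Theorem~\ref{thm:main}.

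Homogeneity is immediate. For $\tau>0$ we have $\mathscr{K}(\tau s,\tau t)=\frac{1}{\tau(s+t)}=\tau^{-1}\mathscr{K}(s,t)$. The kernel is also nonnegative and measurable on $(0,\infty)\times(0,\infty)$, and $\mathscr{K}(1,p^j)=(1+p^j)^{-1}$.

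The only real work is convergence of the bilateral series, which I would split at $j=0$.

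For $j\ge 0$, use $1+p^j\ge p^j$ to bound the term by $p^{j(1-1/r)}p^{-j}=p^{-j/r}$. This is a geometric series with ratio $p^{-1/r}<1$, since $r<\infty$, so it is bounded by $(1-p^{-1/r})^{-1}$.

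For $j<0$, write $j=-k$ with $k\ge1$ and use $1+p^j\ge1$. The term is then at most $p^{-k(1-1/r)}$, again geometric, with ratio $p^{-(1-1/r)}<1$ because $r>1$. Its sum is bounded by $\frac{p^{-(1-1/r)}}{1-p^{-(1-1/r)}}$.

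Adding the two pieces gives an explicit finite upper bound for $C_{p,r}$. Both endpoint restrictions $r>1$ and $r<\infty$ are used here, one for each tail.

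The main point needing care is that the hypotheses of Theorem~\ref{thm:main} are met exactly. The operator $\mathscr{H}$ (the differential in its display should read $\di y$) coincides with $\mathscr{T}^p$ for this kernel. The constant in the statement is the same $C_{p,r}$ as in \eqref{thm:main1}, now evaluated at the Hilbert kernel. With these checks done, Theorem~\ref{thm:main} yields $\|\mathscr{H}f\|_{\mathrm{Ces}_r(\mathbb{Q}_p)}\le C_{p,r}\|f\|_{\mathrm{Ces}_r(\mathbb{Q}_p)}$ for all $f\in\mathrm{Ces}_r(\mathbb{Q}_p)$.
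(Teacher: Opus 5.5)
Your proposal is correct and follows the paper's argument essentially step for step. You check the degree $-1$ homogeneity of $1/(s+t)$, split the series defining $C_{p,r}$ at $j=0$, dominate the two tails by geometric series with ratios $p^{-1/r}$ and $p^{-(1-1/r)}$, and invoke Theorem~\ref{thm:main}. Using $1+p^{j}\ge 1$ directly on the negative tail is only a slightly more streamlined version of the paper's reindexing $j\mapsto -j$. You also correctly flag the misprinted differential in the display defining $\mathscr{H}$.
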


\medskip

\begin{proof}
Observe that the kernel \eqref{Hilbert} is homogeneous of degree -1 and 
\begin{align*}
C_{p,r}
&=
(1-p^{-1})
\sum_{j\in\mathbb{Z}}
\mathscr{K}(1,p^j)\,p^{j\left(1-\frac{1}{r}\right)}
\\
&=
(1-p^{-1})
\sum_{j\in\mathbb{Z}}
\frac{p^{j\left(1-\frac{1}{r}\right)}}{1+p^j}.
\end{align*}

According to \eqref{thm:main}, It is enough to show that $C_{p,r}<\infty$. Now, we can write the sum as

\begin{align*}
C_{p,r}
&= (1-p^{-1})\left(\sum_{j>0} \frac{p^{j(1-\frac{1}{r})}}{1+p^{j}}+
\sum_{j\le 0} \frac{p^{j(1-\frac{1}{r})}}{1+p^{j}}\right) \\
&= (1-p^{-1})\left(\sum_{j=1}^{\infty} \frac{p^{j(1-\frac{1}{r})}}{1+p^{j}}+\sum_{j=-\infty}^{0} \frac{p^{j(1-\frac{1}{r})}}{1+p^{j}}
\right) \\
&= (1-p^{-1})\left(\sum_{j=1}^{\infty} \frac{p^{j(1-\frac{1}{r})}}{1+p^{j}}+\sum_{j=0}^{\infty} \frac{p^{-j(1-\frac{1}{r})}}{1+p^{-j}}
\right) \\
&= (1-p^{-1})\left(\sum_{j=1}^{\infty} \frac{p^{j(1-\frac{1}{r})}}{1+p^{j}}+\sum_{j=0}^{\infty} \frac{p^{j/r}}{1+p^j}
\right) \\
&\le (1-p^{-1})\left(\sum_{j=1}^{\infty} \frac{p^{j(1-\frac{1}{r})}}{p^{j}}+\sum_{j=0}^{\infty} \frac{p^{j/r}}{p^j}
\right) \\
& \le (1-p^{-1})\left(\sum_{j=1}^{\infty} p^{-j/r}+\sum_{j=0}^{\infty} p^{-j(1-\frac{1}{r})}\right).
\end{align*}

Both series converge for  $r>1$, and hence  $C_{p,r}<\infty$. Hence, using Theorem \eqref{thm:main}, we obtain the Hilbert inequality  on $\mathrm{Ces}_r(\mathbb{Q}_p)$.
\end{proof}

 Finally, as an application of the main boundedness theorem, we study the
operator $\mathscr{D}^p_\lambda$ and establish its boundedness on
$\mathrm{Ces}_r(\mathbb{Q}_p)$. Consider the kernel in \eqref{tf:op}

\begin{align} \label{K:Polya}
    \mathscr{K}(|x|_p,|y|_p)=\frac{(|x|_p|y|_p)^{\lambda/2}} {\max\{|x|_p,|y|_p\}^{\lambda+1}}, \qquad \lambda \ge 0.
\end{align}

The corresponding integral operator is given by
\begin{equation}\label{ch4_polya}
\mathscr{D}^p_\lambda f(x) =\int_{\mathbb{Q}_p^*}\frac{(|x|_p|y|_p)^{\lambda/2}}
{\max\{|x|_p,|y|_p\}^{\lambda + 1}} f(y)\,\di y,\qquad x\in\mathbb{Q}_p^* .
\end{equation}
In the special case $\lambda=0$, the operator $\mathscr{D}^p_\lambda$
coincides with the $p$-adic Hardy--Littlewood--P\'{o}lya operator
$\mathscr{P}^p$, which is defined by
\begin{align*}
\mathscr{P}^p f(x)=\int_{\mathbb{Q}_p^*}\frac{f(y)}{\max\{|x|_p,|y|_p\}} \di y, \qquad x\in\mathbb{Q}_p^*.
\end{align*}

\begin{theorem}
Suppose $1 < r<\infty$ and $\lambda/2 +1 > 1/r$. Then the operator
$\mathscr{D}^p_\lambda$, defined by \eqref{ch4_polya}, is bounded on
$\mathrm{Ces}_r(\mathbb{Q}_p)$.
\end{theorem}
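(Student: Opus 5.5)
The plan is to deduce the statement directly from Theorem~\ref{thm:main}, exactly as was done for the Hardy and Hilbert operators above. This requires two things: that the kernel \eqref{K:Polya} is nonnegative and homogeneous of degree $-1$ in the sense of \eqref{Homoge}, and that the constant $C_{p,r}$ in \eqref{thm:main1} is finite under the stated hypotheses.

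\textbf{Homogeneity.} Nonnegativity is clear. For $\tau>0$ and $x,y>0$ we have
\[
\mathscr{K}(\tau x,\tau y)=\frac{(\tau^2 xy)^{\lambda/2}}{(\tau\max\{x,y\})^{\lambda+1}}=\tau^{\lambda-\lambda-1}\,\mathscr{K}(x,y)=\tau^{-1}\mathscr{K}(x,y),
\]
so \eqref{Homoge} holds.

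\textbf{Evaluating the kernel on spheres.} Since $\max\{1,p^j\}$ equals $1$ for $j\le 0$ and $p^j$ for $j>0$,
\[
\mathscr{K}(1,p^j)=p^{j\lambda/2}\ \ (j\le 0),\qquad \mathscr{K}(1,p^j)=p^{j\lambda/2-j(\lambda+1)}=p^{-j(\lambda/2+1)}\ \ (j>0).
\]

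\textbf{Splitting the series.} I would split $C_{p,r}$ into the sums over $j\le 0$ and $j>0$:
\[
C_{p,r}=(1-p^{-1})\left(\sum_{j\le 0}p^{j(\lambda/2+1-1/r)}+\sum_{j\ge 1}p^{-j(\lambda/2+1/r)}\right).
\]
The first sum, rewritten as $\sum_{k\ge0}p^{-k(\lambda/2+1-1/r)}$, is a geometric series. It converges precisely because of the hypothesis $\lambda/2+1>1/r$, and it equals $\bigl(1-p^{-(\lambda/2+1-1/r)}\bigr)^{-1}$. The second sum converges because $\lambda\ge0$ and $r<\infty$ give $\lambda/2+1/r>0$; its value is $p^{-(\lambda/2+1/r)}\bigl(1-p^{-(\lambda/2+1/r)}\bigr)^{-1}$. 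This yields the explicit finite constant
\[
C_{p,r}=(1-p^{-1})\left(\frac{1}{1-p^{-(\lambda/2+1-1/r)}}+\frac{p^{-(\lambda/2+1/r)}}{1-p^{-(\lambda/2+1/r)}}\right).
\]
Theorem~\ref{thm:main} then gives $\|\mathscr{D}^p_\lambda f\|_{\mathrm{Ces}_r(\mathbb{Q}_p)}\le C_{p,r}\|f\|_{\mathrm{Ces}_r(\mathbb{Q}_p)}$.

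\textbf{Remarks.} There is no genuine obstacle here; the only point needing care is the correct exponent bookkeeping in the two ranges of $j$. I would also note that when $r>1$ and $\lambda\ge0$, the condition $\lambda/2+1>1/r$ holds automatically. Specialising to $\lambda=0$ recovers boundedness of the Hardy--Littlewood--P\'olya operator $\mathscr{P}^p$ with constant $(1-p^{-1})\bigl((1-p^{-(1-1/r)})^{-1}+p^{-1/r}(1-p^{-1/r})^{-1}\bigr)$.
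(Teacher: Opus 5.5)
Your proof is correct and follows the paper's argument: check the degree $-1$ homogeneity, evaluate $\mathscr{K}(1,p^j)$ on the two ranges of $j$, and verify that both geometric series in $C_{p,r}$ converge before invoking Theorem~\ref{thm:main}. The paper splits the sum at $j\ge 0$/$j<0$ rather than $j\le 0$/$j\ge 1$, which makes no difference. Your explicit value of $C_{p,r}$ and your observation that $\lambda/2+1>1/r$ holds automatically when $r>1$ and $\lambda\ge 0$ are both correct additions.
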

\begin{proof}
 As in the proof of Theorem \ref{thm:main}, it is sufficient to verify the finiteness of the constant $C_{p,r}$.

For $a\in\mathbb{Q}_p^*$, we have
\[
\mathscr{K}(|as|_p,|at|_p)
=
\frac{(|a|_p|s|_p\,|a|_p|t|_p)^{\lambda/2}}
{\max\{|a|_p|s|_p,|a|_p|t|_p\}^{\lambda+1}}
=
|a|_p^{-1}\mathscr{K}(|s|_p,|t|_p),
\]
which shows that the  kernal \eqref{K:Polya}  satisfies the required homogeneity condition\eqref{Homoge}.

Next, we compute
\begin{align*}
C_{p,r}=(1-p^{-1})\sum_{j\in\mathbb{Z}}\mathscr{K}(1,p^j)\,p^{j\left(1-\frac{1}{r}\right)}.
\end{align*}
A direct computation yields
\begin{align*}
\mathscr{K}(1,p^j)=\frac{p^{j\lambda/2}}{\max\{p^j,1\}^{\lambda+1}}=
\begin{cases}
p^{-j(\lambda/2+1)}, & j\ge0,\\[4pt]
p^{j\lambda/2}, & j<0.
\end{cases}
\end{align*}
Therefore,
\begin{align*}
C_{p,r}&=(1-p^{-1})\left(\sum_{j \ge 0}p^{-j(\lambda/2+1)}\,p^{j(1-\frac{1}{r})}+\sum_{j<0}p^{j\lambda/2}\,p^{j(1-\frac{1}{r})}\right)\\
&=(1-p^{-1})\left(\sum_{j\ge0}p^{-j(\lambda/2+\frac{1}{r})}+\sum_{j<0}
p^{j(\lambda/2+1-\frac{1}{r})}
\right).
\end{align*}
Since the first summation is a geometric series, it converges immediately. For the second summation, the convergence is ensured by the condition $\lambda/2 +1 > 1/r$, which guarantees that the corresponding exponent is strictly negative. Hence, the series is summable, and consequently we obtain $C_{p,r}<\infty$. The desired boundedness then follows directly from Theorem~\eqref{thm:main}.

\end{proof}
\begin{remark}
    Setting $\lambda=0$ in the above theorem, the operator $\mathscr{D}^p_\lambda$ reduces to the $p$-adic Hardy--Littlewood--P\'{o}lya operator $\mathscr{P}^p$, and consequently $\mathscr{P}^p$ is bounded on the Ces\`aro function spaces $\mathrm{Ces}_r(\mathbb{Q}_p)$. 
\end{remark}

\section{$p$-adic Fractional integrals}

In this section, we introduce the $p$-adic analogue of the Erd\'elyi--Kober fractional integral operators of both the first and second kind. As a direct consequence of Theorem~\ref{thm:main}, we establish the boundedness of these newly defined $p$-adic fractional integral operators on $\mathrm{Ces}_r(\mathbb{Q}_p)$. For the classical definition and standard notation of the Erd\'elyi--Kober fractional integral operators, we refer to \cite{KP}.

Let $\nu>0$, $\delta>0$, and $\gamma\in \mathbb{R}$. For a
measurable function $f$ on $\mathbb{Q}_p$, the first and second kind $p$-adic Erd\'elyi--Kober fractional integrals, respectively, are defined as follows:
\begin{align*}
I_{\gamma,\delta}^{\nu}f(s)=\int_{\mathbb{Q}_p^*}I(|s|_p,|t|_p) f(t) \di t, \qquad  s \in \mathbb{Q}_p^*, 
\end{align*}
and
\begin{align*}
J_{\gamma,\delta}^{\nu}f(s)=\int_{\mathbb{Q}_p^*}J(|s|_p,|t|_p) f(t) \di t, \qquad t\in\mathbb{Q}_p^*,
\end{align*}
where the kernels $I(|s|_p,|t|_p)$ and $J(|s|_p,|t|_p)$ are given by
\begin{align}
I(|s|_p,|t|_p)=\frac{|s|_p^{-\nu(\gamma+1)}}{\Gamma(\delta)}\chi_{\{t \in \mathbb{Q}_p : |t|_p < |s|_p\}}(t)
|t|_p^{\nu(\gamma+1)-1}
\left(|s|_p^\nu-|t|_p^\nu\right)^{\delta-1},
\end{align}
and
\begin{align}
J(|s|_p,|t|_p)=\frac{|s|_p^{\nu\gamma}}{\Gamma(\delta)}
\,\chi_{\{t \in \mathbb{Q}_p : |t|_p > |s|_p\}}(t)|t|_p^{-\nu(\gamma+\delta)+\nu-1}
\left(|t|_p^\nu-|s|_p^\nu\right)^{\delta-1}.   
\end{align}
The Erd\'elyi--Kober fractional integrals are among the classical operators in fractional calculus and occur in various contexts of mathematical physics. For a detailed discussion of their properties and applications, we refer \cite{KVS,SI}.
In the following theorem, we prove that the $p$-adic Erd\'elyi--Kober fractional integral is bounded on the Ces\`aro function space
$\mathrm{Ces}_r(\mathbb{Q}_p)$.

\medskip

\begin{theorem} Let $r\in(1,\infty)$ and $ \delta,\nu \in \mathbb{R}_{>0}, ~  \gamma \in \mathbb{R}$.

\begin{itemize}
\item[(1)]
If $\nu(\gamma+1)>\frac{1}{r}$, then there exists a constant $C>0$ such that
for any $f\in \mathrm{Ces}_r(\mathbb{Q}_p)$,
\[
\|I_{\gamma,\delta}^{\nu} f\|_{\mathrm{Ces}_r(\mathbb{Q}_p)}
\le
C\,\|f\|_{\mathrm{Ces}_r(\mathbb{Q}_p)}.
\]

\item[(2)]
If $\nu\gamma>-\frac{1}{r}$, then there exists a constant $C>0$ such that
for any $f\in \mathrm{Ces}_r(\mathbb{Q}_p)$,
\[
\|J_{\gamma,\delta}^{\nu}  f\|_{\mathrm{Ces}_r(\mathbb{Q}_p)}
\le
C\,\|f\|_{\mathrm{Ces}_r(\mathbb{Q}_p)}.
\]
\end{itemize}
\end{theorem}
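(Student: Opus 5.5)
The plan is to derive both parts from Theorem~\ref{thm:main}. For each kernel there are two things to do: verify the homogeneity condition \eqref{Homoge}, and show that the constant $C_{p,r}=(1-p^{-1})\sum_{j\in\mathbb{Z}}\mathscr{K}(1,p^j)p^{j(1-1/r)}$ is finite. The constant $C$ in each part will then be the corresponding $C_{p,r}$.

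\textbf{Part (2).} I start with $J$, which is cleaner. For $\tau>0$,
\[
J(\tau s,\tau t)=\tau^{\nu\gamma}\,\tau^{-\nu(\gamma+\delta)+\nu-1}\,\tau^{\nu(\delta-1)}J(s,t)=\tau^{-1}J(s,t),
\]
so \eqref{Homoge} holds. The indicator forces $p^j>1$, so only the terms with $j\ge1$ survive. For these,
\[
J(1,p^j)=\frac{1}{\Gamma(\delta)}p^{j(-\nu(\gamma+\delta)+\nu-1)}\bigl(p^{j\nu}-1\bigr)^{\delta-1}.
\]
Write $(p^{j\nu}-1)^{\delta-1}=p^{j\nu(\delta-1)}(1-p^{-j\nu})^{\delta-1}$. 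For $j\ge1$ we have $1-p^{-j\nu}\in[1-p^{-\nu},1)$, so the factor $(1-p^{-j\nu})^{\delta-1}$ is bounded by $M:=\max\{1,(1-p^{-\nu})^{\delta-1}\}$, whatever the sign of $\delta-1$. Hence the $j$-th term of the series is at most
\[
\frac{M}{\Gamma(\delta)}\,p^{-j(\nu\gamma+1/r)}.
\]
This is a geometric series, and it converges exactly when $\nu\gamma>-1/r$.

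\textbf{Part (1).} The same scheme applies to $I$. Now the indicator $|t|_p<|s|_p$ leaves only $j\le-1$. For these, $1-p^{j\nu}\in[1-p^{-\nu},1)$, so the factor $(1-p^{j\nu})^{\delta-1}$ is again bounded uniformly by $M$. The $j$-th term is then dominated by
\[
\frac{M}{\Gamma(\delta)}\,p^{j(\nu(\gamma+1)-1/r)}, \qquad j\le-1,
\]
which is summable precisely when $\nu(\gamma+1)>1/r$.

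An important feature is that the possible singularity of $(|s|_p^\nu-|t|_p^\nu)^{\delta-1}$ at $|t|_p=|s|_p$ never occurs. The strict inequality in the indicator, together with the discreteness of $|\cdot|_p$, keeps $|t|_p/|s|_p$ at least a factor $p$ away from $1$. This is what allows the uniform bound $M$ for every $\delta>0$.

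\textbf{Main obstacle: homogeneity of $I$.} With the prefactor $|s|_p^{-\nu(\gamma+1)}$ as printed, the kernel $I$ scales like $\tau^{\nu(\delta-1)-1}$, so it is homogeneous of degree $-1$ only when $\delta=1$. I would therefore first check the normalization against the classical Erd\'elyi--Kober operator used by Ho \cite{KP}. There the prefactor is $|s|_p^{-\nu(\gamma+\delta)}$, and with it the scaling exponents are
\[
-\nu(\gamma+\delta)+\nu(\gamma+1)-1+\nu(\delta-1)=-1,
\]
so \eqref{Homoge} holds for every $\delta>0$. With this corrected prefactor, $I(1,p^j)$ is exactly the expression used above and the convergence argument of Part~(1) goes through unchanged.

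If instead the printed kernel is meant literally, Theorem~\ref{thm:main} does not apply directly. In that case I would fall back on the same dilation argument used in its proof, via Lemma~\ref{lem:first} and Lemma~\ref{lem:Second}. The extra factor $|x|_p^{\nu(\delta-1)}$ then prevents a uniform constant unless $\delta=1$, so I expect the intended reading to be the classical normalization.
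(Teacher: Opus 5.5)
Your argument follows the paper's route: reduce to Theorem~\ref{thm:main} by checking homogeneity and summing $C_{p,r}$ over spheres. Your uniform bound $M$ on $(1-p^{-|j|\nu})^{\delta-1}$ is a cleaner version of the paper's ``$\sim 1$'' and ``$p^{j\nu}-1\sim p^{j\nu}$'' steps. The paper writes those steps as equalities for $C_{p,r}$ rather than as a comparison.

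Your homogeneity objection is correct, and it points to a genuine error in the paper's proof. The paper asserts $I(|as|_p,|at|_p)=|a|_p^{-1}I(|s|_p,|t|_p)$ for the printed kernel, but the actual factor is $|a|_p^{\nu(\delta-1)-1}$. The error goes unnoticed there because $C_{p,r}$ only involves $I(1,p^j)$, where the prefactor equals $1$.

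Your suspicion about the literal reading is also right, and it can be made rigorous. The printed operator is $|s|_p^{\nu(\delta-1)}$ times the correctly normalized operator, which commutes with dilations. Hence $I^{\nu}_{\gamma,\delta}D_b=|b|_p^{-\nu(\delta-1)}D_bI^{\nu}_{\gamma,\delta}$. Lemma~\ref{lem:first} then turns a bound $\|I^{\nu}_{\gamma,\delta}f\|_{\mathrm{Ces}_r(\mathbb{Q}_p)}\le C\|f\|_{\mathrm{Ces}_r(\mathbb{Q}_p)}$ into the same bound with $C$ replaced by $C|b|_p^{\nu(\delta-1)}$, for every $b\in\mathbb{Q}_p^*$. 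For $\delta\neq1$ this is impossible, since $I^{\nu}_{\gamma,\delta}\chi_{B^0}\neq0$.

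So part (1) holds only with the prefactor $|s|_p^{-\nu(\gamma+\delta)}$ that you propose. With that correction, your proof is complete.
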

\medskip

\noindent

\begin{proof} 
Let us sketch the proof, since 
\begin{align*}
I_{\gamma,\delta}^{\nu}f(s)=\int_{\mathbb{Q}_p^*}I(|s|_p,|t|_p) f(t) \di t, \qquad  s \in \mathbb{Q}_p^*, 
\end{align*}
where
\begin{align*}
I(|s|_p,|t|_p)=\frac{|s|_p^{-\nu(\gamma+1)}}{\Gamma(\delta)}\chi_{\{t \in \mathbb{Q}_p : |t|_p < |s|_p\}}(t)
|t|_p^{\nu(\gamma+1)-1}
\left(|s|_p^\nu-|t|_p^\nu\right)^{\delta-1}.
\end{align*}
For any $a \in \mathbb{Q}_p^*,$ we have
\begin{align*}
I(|as|_p,|at|_p) &=\frac{|as|_p^{-\nu(\gamma+1)}}{\Gamma(\delta)}\chi_{\{t \in \mathbb{Q}_p : |t|_p < |s|_p\}}(at)
|at|_p^{\nu(\gamma+1)-1}
\left(|as|_p^\nu-|at|_p^\nu\right)^{\delta-1} \\
&=|a|_p^{-1} I(|s|_p,|t|_p).
\end{align*}
Thus, the kernel $I(|s|_p,|t|_p)$ satisfies the homogeneity condition \eqref{Homoge}. Furthermore, by Theorem \ref{thm:main}, it suffices to verify that $C_{p,r}<\infty$ in order to establish the boundedness of $I_{\gamma,\delta}^{\nu} $ on $\mathrm{Ces}_r(\mathbb{Q}_p).$ Hence 
\begin{align*}
C_{p,r}&=(1-p^{-1})\sum_{j\in\mathbb{Z}}I(1,p^j)\,p^{j\left(1-\frac{1}{r}\right)}\\
&=\frac{(1-p^{-1})}{\Gamma(\delta)}
\sum_{j < 0}
p^{j\left(\nu(\gamma+1)-\frac{1}{r}\right)}
\left(1-p^{j\nu}\right)^{\delta-1}.
\end{align*}

Setting $j=-m$, we obtain
\begin{align*}
    C_{p,r} = \frac{(1-p^{-1})}{\Gamma(\delta)}\sum_{m=1}^{\infty}
p^{-m\left(\nu(\gamma+1)-\frac{1}{r}\right)}
\left(1-p^{-m\nu}\right)^{\delta-1}.
\end{align*}
Observe that for large $m$, the term
$\left(1-p^{-mh}\right)^{\delta-1}\sim 1$.
Therefore,
\begin{align*}
    C_{p,r}
=
\frac{(1-p^{-1})}{\Gamma(\delta)}
\sum_{m=0}^{\infty}
p^{-m\left(\nu(\gamma+1)-\frac{1}{r}\right)} < \infty, \qquad \text{since} ~\nu(\gamma+1)>\frac{1}{r}.
\end{align*}

\medskip

Similarly, the kernel
\begin{align}
J(|s|_p,|t|_p)=\frac{|s|_p^{\nu\gamma}}{\Gamma(\delta)}
\,\chi_{\{t \in \mathbb{Q}_p : |t|_p > |s|_p\}}(t)|t|_p^{-\nu(\gamma+\delta)+\nu-1}
\left(|t|_p^\nu-|s|_p^\nu\right)^{\delta-1}, 
\end{align}
satisfies the homogeneity condition \eqref{Homoge}. Moreover,
\begin{align*}
C_{p,r} &=(1-p^{-1})\sum_{j\in\mathbb{Z}}J(1,p^j)p^{j\left(1-\frac{1}{r}\right)} \\
&= \frac{(1-p^{-1})}{\Gamma(\delta)}\sum_{j>0}(p^{j\nu}-1)^{\delta-1}
p^{j\left(-\nu(\gamma+\delta)+\nu-\frac{1}{r}\right)}.
\end{align*}

Observe that for large $j, ~p^{j\nu}-1\sim p^{j\nu}$, so we have
\begin{align*}
C_{p,r}&=\frac{(1-p^{-1})}{\Gamma(\delta)}\sum_{j=1}^{\infty}p^{j\nu(\delta-1)}
p^{j\left(-\nu(\gamma+\delta)+\nu-\frac{1}{r}\right)}\\
&=\frac{1-p^{-1}}{\Gamma(\delta)}\sum_{j=1}^{\infty}
p^{-j\left(\nu\gamma+\frac{1}{r}\right)}
<\infty, \qquad \text{since} \qquad \nu\gamma>-\frac{1}{r}.
\end{align*}
Hence, by Theorem \eqref{thm:main}, we have the boundedness of $J_{\gamma,\delta}^{\nu}$ on $\mathrm{Ces}_r(\mathbb{Q}_p),$
which completes the proof.
\end{proof}


\bibliographystyle{amsplain}

\end{document}